\documentclass[12pt]{article}
\usepackage{amsfonts}
\newtheorem{lemma}{Lemma}
\newtheorem{theorem}{Theorem}
\newtheorem{proposition}{Proposition}

\newenvironment{proof}{{\bf Proof}.}{\hfill $\Box$}

\begin{document}


\title{\sc On the quadratic Fock functor\footnote{Research partially supported by PBCT-ADI 13 grant ``Laboratorio de An\'alisis y Estoc\'astico'', Chile}}

\author{ Ameur Dhahri\\\vspace{-2mm}\scriptsize Centro de An\'alisis Estoc\'astico y Aplicaciones\\\vspace{-2mm}\scriptsize Pontificia Universidad Cat\'olica de Chile\\\vspace{-2mm}\scriptsize adhahri@puc.cl}
\date{}
\maketitle
\begin{abstract}
We prove that the quadratic second quantization of an operator $p$ on $L^2(\mathbb{R}^d)\cap L^\infty (\mathbb{R}^d)$ is an orthogonal projection on the quadratic Fock space if and only if $p$ is a multiplication operator by a characteristic function $\chi_I$, $I\subset\mathbb{R}^d$.
\end{abstract}
\section{Introduction}
The renormalized square of white noise (RSWN) was first introduced by Accardi-Lu--Volovich in \cite{AcLuVo}. Later, Sniady introduced the free RSWN white noise (cf \cite{Sn}). Subsequently, its relation with the L\'evy processes on real Lie algebras was established in \cite{AcFrSk}.

Recently, in \cite{AcDh1}--\cite{AcDh2}, the authors constructed the quadratic Fock functor. In particular, they characterized the operators on the one-particle Hilbert algebra whose quadratic second quantization is isometric (resp. unitary). A sufficient condition for the contractivity of the quadratic second quantization was derived too.

It is well known that the first order second quantization $\Gamma_1(p)$ of an operator $p$, defined on the usual Fock space, is an orthogonal projection if and only if $p$ is an orthogonal projection (cf \cite{Par}). In the present paper, it is shown that the set of orthogonal projections $p$, whose quadratic second quantization $\Gamma_2(p)$ is an orthogonal projection, is quite reduced. More precisely, we prove that $\Gamma_2(p)$ is an orthogonal projection if and only if $p$ is a multiplication operator by a characteristic function $\chi_I$, $I\subset\mathbb{R}^d$.  

This paper is organized as follows. In section \ref{section1}, we recall some basic properties of the quadratic Fock functor. The main result is proved in section \ref{section2}.

\section{Quadratic Fock functor}\label{section1}

The algebra of the renormalized square of white noise (RSWN) with test function Hilbert algebra
$$
{\cal A} := L^2(\mathbb{R^d})\cap L^\infty(\mathbb{ R^d})
$$
is the $*$-Lie-algebra, with central element denoted $1$, generators $B^+_f,B_h, N_g ,\, \, f,g,h\in L^2(\mathbb{R}^d)\cap L^\infty(\mathbb{R}^d)\}$, involution 
$$
(B^+_f)^*=B_f \qquad , \qquad N_f^*=N_{\bar{f}}
$$ 
and commutation relations
\begin{eqnarray}\label{commutation}
[B_f,B^+_g]=2c\langle f,g\rangle+4N_{\bar fg},\,\;[N_a,B^+_f]=2B^+_{af}
\end{eqnarray}
$$[B^+_f,B^+_g]=[B_f,B_g]=[N_a,N_{a'}]=0,$$
for all $a$, $a'$, $f$, $g\in L^2(\mathbb{R}^d)\cap L^\infty(\mathbb{R}^d)$.

The Fock representation of the RSWN is characterized by a cyclic vector $\Phi$ satisfying 
$$
B_f\Phi=N_g\Phi=0
$$ 
for all $f,g\in L^2(\mathbb{R}^d)\cap L^\infty(\mathbb{R}^d)$ (cf \cite{AcAmFr}, \cite{AcFrSk}). 

\subsection{Quadratic Fock space}

In this subsection, we recall some basic definitions and properties of the quadratic exponential vectors and the quadratic Fock space. We refer the interested reader to \cite{AcDh1}--\cite{AcDh2} for more details.

The quadratic Fock space $\Gamma_2(L^2(\mathbb{R}^d)\cap L^\infty(\mathbb{R}^d))$ is the closed linear span of $\big\{B^{+n}_f\Phi$, $n\in\mathbb{N}$, $f\in L^2(\mathbb{R}^d)\cap L^\infty(\mathbb{R}^d)\}$, where $B^{+0}_f\Phi=\Phi$, for all $f\in L^2(\mathbb{R}^d)\cap L^\infty(\mathbb{R}^d)$. From \cite{AcDh2} it follows that $\Gamma_2(L^2(\mathbb{R}^d)\cap L^\infty(\mathbb{R}^d))$ is an interacting Fock space. Moreover, the scalar product between two $n$-particle vectors is given by the following (cf \cite{AcDh1}).
\begin{proposition} \label{prop1}For all $f,\,g\in L^2(\mathbb{R}^d)\cap L^\infty(\mathbb{R}^d)$, one has
\begin{eqnarray*}
\langle B^{+n}_f\Phi,B^{+n}_g\Phi\rangle&=&c\sum^{n-1}_{k=0}2^{2k+1}{n!(n-1)!\over((n-k-1)!)^2}\,\langle f^{k+1}, g^{k+1}\rangle\\
\;\;\;\;\;&&\langle B^{+(n-k-1)}_f\Phi,B^{+(n-k-1)}_g\Phi\rangle.
\end{eqnarray*}
\end{proposition}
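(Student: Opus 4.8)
The plan is to compute the scalar product directly from the RSWN commutation relations. I would start from $\langle B^{+n}_f\Phi,B^{+n}_g\Phi\rangle=\langle\Phi,B_f^{n}B^{+n}_g\Phi\rangle$ (using $(B^+_f)^*=B_f$) and first work out how a single annihilator acts on a product of creators applied to the vacuum. Combining $[B_f,B^+_u]=2c\langle f,u\rangle+4N_{\bar fu}$ with $[N_a,B^+_u]=2B^+_{au}$, the vacuum conditions $B_f\Phi=N_a\Phi=0$ and the commutativity of the $B^+$'s, one obtains, for arbitrary functions $u_1,\dots,u_r$,
\[
B_f\big(B^+_{u_1}\cdots B^+_{u_r}\Phi\big)=2c\sum_{i}\langle f,u_i\rangle\,B^+_{u_1}\cdots\widehat{B^+_{u_i}}\cdots B^+_{u_r}\Phi
\;+\;8\sum_{i<j}B^+_{u_1}\cdots\widehat{B^+_{u_i}}\cdots B^+_{\bar fu_iu_j}\cdots B^+_{u_r}\Phi,
\]
where a hat means omission and, in the second sum, $B^+_{u_j}$ is replaced by $B^+_{\bar fu_iu_j}$. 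The point that makes this usable is that if one starts from $u_1=\dots=u_n=g$, then every creator that is ever produced has the form $B^+_{\bar f^lg^{l+1}}$ for some integer $l\ge0$ (with $g=\bar f^0g^1$); calling $l$ its level, the second term merges a level-$a$ and a level-$b$ creator into one of level $a+b+1$, while the first term deletes a level-$l$ creator with the scalar weight $2c\,\langle f,\bar f^lg^{l+1}\rangle=2c\,\langle f^{l+1},g^{l+1}\rangle$ (this is where $\mathcal A=L^2\cap L^\infty$ enters, so that the pointwise powers $f^{l+1},g^{l+1}$ lie in $L^2$).

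Iterating $B_f$ exactly $n$ times (each step lowers the number of creators by one) turns $B_f^{n}B^{+n}_g\Phi$ into a scalar multiple of $\Phi$; since $\langle\Phi,B^+_{v_1}\cdots B^+_{v_s}\Phi\rangle=0$ for $s\ge1$, this scalar equals $\langle B^{+n}_f\Phi,B^{+n}_g\Phi\rangle$ (normalising $\|\Phi\|=1$). Labelling the $n$ initial creators $1,\dots,n$, the scalar is therefore the sum, over all contraction histories — sequences of $n$ moves, each a deletion (weight $2c\langle\cdot,\cdot\rangle$) or a merge (weight $8$), taking $\{1,\dots,n\}$ to $\emptyset$ — of the product of the weights.

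To reach the stated recursion I would condition on the history of the distinguished creator $1$. Let $k+1$ be the size of its cluster, i.e. the number of initial creators that have been merged into it by the time it is finally deleted ($0\le k\le n-1$); that deletion contributes $2c\,\langle f^{k+1},g^{k+1}\rangle$ and the $k$ merges building the cluster contribute $8^{k}=2^{3k}$. The other $k$ cluster members are chosen from the remaining $n-1$ creators in ${n-1 \choose k}$ ways; the $k$ merges can be performed on $k+1$ labelled creators in $\prod_{j=2}^{k+1}{j \choose 2}=\frac{(k+1)!\,k!}{2^{k}}$ ways; the cluster's $k+1$ moves are shuffled among the $n-k-1$ moves of the remaining creators in ${n \choose k+1}$ ways; and, since the remaining creators evolve without interacting with the cluster, the sum over their histories is by definition $\langle B^{+(n-k-1)}_f\Phi,B^{+(n-k-1)}_g\Phi\rangle$. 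The product of all these factors is
\[
{n-1 \choose k}{n \choose k+1}\frac{(k+1)!\,k!}{2^{k}}\,2^{3k}\cdot 2c\;=\;c\,2^{2k+1}\,\frac{n!\,(n-1)!}{((n-k-1)!)^{2}},
\]
so summing over $k=0,\dots,n-1$ gives the proposition.

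The commutator manipulations producing the two-move rule are routine; the two places needing care are the bookkeeping there — checking that the class $\{B^+_{\bar f^lg^{l+1}}\}$ is closed under the operations, so that the evolution is faithfully modelled by the elementary combinatorial process above — and the final counting, where one has to see that splitting a history into ``cluster of creator $1$'' plus ``everything else'' is a genuine bijection and that the weight of a history factorises accordingly. As a check, the formula should be verified by hand for $n=1,2,3$.
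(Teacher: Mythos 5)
Your argument is correct, and I verified the key count: with the single-annihilator rule you derive (deletion weight $2c\langle f,u_i\rangle$, merge weight $8$ per unordered pair, with levels adding as $a+b+1$), the coefficient
$\binom{n-1}{k}\binom{n}{k+1}\frac{(k+1)!\,k!}{2^{k}}\,2^{3k}\cdot 2c$
does reduce to $c\,2^{2k+1}\frac{n!(n-1)!}{((n-k-1)!)^2}$, and the cluster-of-creator-$1$ decomposition of a contraction history is indeed a weight-preserving bijection, so the recursion follows. Note, however, that the paper does not prove this proposition at all: it quotes it from the reference [AcDh1], where the derivation is an induction on $n$ based on operator identities for $[B_f,B_g^{+n}]$ and $N_aB_g^{+n}\Phi$ obtained from the same commutation relations (essentially, pulling one $B_f$ through $B_g^{+n}$ to get $B_fB_g^{+n}\Phi=2cn\langle f,g\rangle B_g^{+(n-1)}\Phi+4n(n-1)B^+_{\bar f g^2}B_g^{+(n-2)}\Phi$ and iterating on the second term). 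Your sum-over-histories version is the ``unrolled'' combinatorial form of that induction: it costs you the bookkeeping you flag (closure of the class $B^+_{\bar f^l g^{l+1}}$, distinctness of merge orders as terms of the expansion, factorization of weights), but it buys a closed global picture of the pairing as a cluster expansion, from which the recursion and the explicit non-recursive coefficients are both immediate. The only point I would insist you spell out is that distinct merge orders really do correspond to distinct terms in the iterated expansion of $B_f^nB_g^{+n}\Phi$ (this is what legitimizes the factor $\prod_{j=2}^{k+1}\binom{j}{2}$); once that is said, the proof is complete and the checks for $n=1,2$ confirm the normalization.
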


The quadratic exponential vector of an element $f\in L^2(\mathbb{R}^d)\cap L^\infty(\mathbb{R}^d)$, if it exists, is given by
$$
\Psi(f)=\sum_{n\geq0}\frac{B^{+n}_f\Phi}{n!}
$$ 
where by definition
\begin{equation}\label{Psi(0)Phi} 
\Psi(0)= B^{+0}_f\Phi = \Phi.
\end{equation}

In \cite{AcDh1}, it is proved that the quadratic exponential vector $\Psi(f)$ exists if $\|f\|_\infty<\frac{1}{2}$, and does not exists if  $\|f\|_\infty>\frac{1}{2}$. Furthermore, the scalar product between two exponential vectors, $\Psi(f)$ and $\Psi(g)$, is given by
\begin{equation}\label{Form}
\langle \Psi(f),\Psi(g)\rangle=e^{-\frac{c}{2}\int_{\mathbb{R}^d}\ln(1-4\bar{f}(s)g(s))ds}.
\end{equation}

Now, we refer to \cite{AcDh1} for the proof of the following theorem.
\begin{theorem}
The quadratic exponential vectors are linearly independents. Moreover, the set of quadratic exponential vectors is a total set in $\Gamma_2(L^2(\mathbb{R}^d)\cap L^\infty(\mathbb{R}^d))$.
\end{theorem}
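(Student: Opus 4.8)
\emph{Proof proposal.} I would prove the two assertions separately, using only Proposition~\ref{prop1}, the scalar-product formula~(\ref{Form}), and the interacting-Fock-space structure of $\Gamma_2(\mathcal A)$, $\mathcal A=L^2(\mathbb{R}^d)\cap L^\infty(\mathbb{R}^d)$: the $n$-particle subspaces $\Gamma_2^{(n)}:=\overline{\mathrm{span}}\{B^{+n}_f\Phi:f\in\mathcal A\}$ are mutually orthogonal, their closed sum is $\Gamma_2(\mathcal A)$, and the $\Gamma_2^{(n)}$-component of $\Psi(f)$ is $\frac1{n!}B^{+n}_f\Phi$.

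\emph{Totality.} It is enough to show that every generating vector $B^{+m}_f\Phi$ lies in $H:=\overline{\mathrm{span}}\{\Psi(g):\|g\|_\infty<\frac12\}$. Fix $f\in\mathcal A$ and $\varepsilon>0$ with $\|\varepsilon f\|_\infty<\frac12$. Since $B^+$ is linear in its test function, $\Psi(z\varepsilon f)=\sum_{n\ge0}\frac{(z\varepsilon)^n}{n!}B^{+n}_f\Phi$ for $|z|\le1$, and this series converges in norm uniformly in $z$ on the closed unit disc because $\Psi(\varepsilon f)$ exists; hence $z\mapsto\Psi(z\varepsilon f)$ is a $\Gamma_2(\mathcal A)$-valued holomorphic function on the disc whose values lie in the generating set of $H$. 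Its $m$-th Taylor coefficient at $0$ equals $\frac{\varepsilon^m}{m!}B^{+m}_f\Phi$, and by Cauchy's formula this coefficient is a norm-convergent contour (Bochner) integral of those values, hence lies in $H$. As $f$ and $m$ are arbitrary, the quadratic exponential vectors are total.

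\emph{Linear independence.} Suppose $\sum_{j=1}^N c_j\Psi(f_j)=0$ with the $f_j\in\mathcal A$ pairwise distinct and $\|f_j\|_\infty<\frac12$. Taking the scalar product with an arbitrary $\Psi(g)$ and using~(\ref{Form}) gives
\[
\sum_{j=1}^N c_j\exp\!\left(-\frac c2\int_{\mathbb{R}^d}\ln(1-4\,\overline{f_j(s)}\,g(s))\,ds\right)=0
\]
for every $g\in\mathcal A$ with $\|g\|_\infty<\frac12$, so it suffices to prove that, for distinct $f_1,\dots,f_N$, the scalar functions $g\mapsto F_j(g):=\langle\Psi(f_j),\Psi(g)\rangle$ are linearly independent. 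Restricting to a complex line $g=tg_0$ and expanding the logarithm, $F_j$ becomes $t\mapsto e^{\psi_j(t)}$ with $\psi_j$ holomorphic near $0$, $\psi_j(0)=0$, and $\psi_j'(0)=2c\langle f_j,g_0\rangle$. Because the $f_j$ are distinct, the finitely many subspaces $\{g_0:\langle f_j-f_k,g_0\rangle=0\}$, $j\neq k$, contain no $\|\cdot\|_\infty$-ball, so I can fix $g_0$ of small sup norm making the numbers $\psi_j'(0)$ pairwise distinct; then the differences $\psi_j-\psi_k$ are non-constant, and the classical linear independence of exponentials of holomorphic functions with pairwise non-constant differences (a Wronskian argument, or induction on $N$) forces all $c_j=0$, which by the grading gives the assertion. (Alternatively one can stay inside $\Gamma_2(\mathcal A)$: the $\Gamma_2^{(n)}$-components yield $\sum_j c_j B^{+n}_{f_j}\Phi=0$ for all $n$, and one extracts $c_j=0$ inductively from $n=0,1,2,\dots$ using Proposition~\ref{prop1}; already $n=1$ gives $\sum_j c_j f_j=0$ in $L^2$.)

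The routine parts are the Cauchy--Taylor argument for totality and the reduction of linear independence to a statement about the scalar functions $F_j$. The main obstacle I anticipate is the final step of that reduction: unlike the first-order Fock case, here the exponents $\psi_j$ are not linear in $g$ (they are $2c\langle f_j,g\rangle+O(g^2)$), so one must control these higher-order corrections---this is exactly why one first passes to a line $g=tg_0$ and chooses $g_0$ so that the leading coefficients $\psi_j'(0)$ are pairwise distinct, after which the standard Wronskian/induction argument for exponentials applies. On the alternative route the obstacle becomes bookkeeping: identifying precisely which linear relations among $c_1,\dots,c_N$ the $n$-th particle component of $\sum_j c_j\Psi(f_j)=0$ yields, and checking that together (over $n=0,1,\dots,N-1$, say) they admit only the trivial solution.
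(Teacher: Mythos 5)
Your totality argument is fine, but the paper itself does not prove this theorem---it is quoted from \cite{AcDh1}---so there is no in-paper proof to compare against; I judge the proposal on its own terms. The Cauchy--Taylor half is correct and is essentially the holomorphic version of the derivative formula the paper later establishes in Lemma \ref{lem}: since the $n$-particle components are orthogonal and $\Psi(\varepsilon f)$ exists, the series for $\Psi(z\varepsilon f)$ converges in norm uniformly on $|z|\le 1$, the contour integral $\frac{1}{2\pi i}\oint_{|z|=1} z^{-m-1}\Psi(z\varepsilon f)\,dz=\frac{\varepsilon^m}{m!}B^{+m}_f\Phi$ is a norm limit of finite linear combinations of exponential vectors, and the vectors $B^{+m}_f\Phi$ are total by the definition of $\Gamma_2(L^2(\mathbb{R}^d)\cap L^\infty(\mathbb{R}^d))$.

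The linear-independence half has a genuine gap: the ``classical'' fact you invoke---that $e^{\psi_1},\dots,e^{\psi_N}$ are linearly independent over $\mathbb{C}$ whenever the $\psi_j$ are holomorphic near $0$ with pairwise non-constant differences---is \emph{false} for germs of holomorphic functions (the Borel-type theorem needs the $\psi_j$ to be entire, which yours are not: they are only defined where $\|4t\overline{f_j}g_0\|_\infty<1$). Concretely, on a small disc about $0$ take $\psi_3=0$, $\psi_2(t)=t$, and $\psi_1(t)=\log(-1-e^t)$ (any branch; $-1-e^0=-2\neq 0$). Then
\[
e^{\psi_1(t)}+e^{\psi_2(t)}+e^{\psi_3(t)}=(-1-e^t)+e^t+1\equiv 0,
\]
yet all three pairwise differences are non-constant and the derivatives at $0$, namely $1/2$, $1$, $0$, are pairwise distinct. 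So your preparation of $g_0$ (making the numbers $\psi_j'(0)=2c\langle f_j,g_0\rangle$ pairwise distinct) does not suffice, and neither the Wronskian nor the na\"{\i}ve induction closes the argument, because after dividing by $e^{\psi_N}$ and differentiating the coefficients cease to be constants. By restricting to a single line $g=tg_0$ you have thrown away exactly the information (the full family of Taylor coefficients $\langle f_j^m,g^m\rangle$, $m\ge 1$, as $g$ varies) that distinguishes your specific $\psi_j$ from such counterexamples; a correct proof must use it. Your alternative, graded route is also not yet a proof: since the $f_j$ are only distinct, not linearly independent, the $n=1$ component yields the single relation $\sum_j c_jf_j=0$, and the identification of the relations coming from $n\ge 2$ via Proposition \ref{prop1}---which you defer as ``bookkeeping''---is precisely the missing content.
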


\subsection{Quadratic second quantization}

For all linear operator $T$ on $L^2(\mathbb{R}^d)\cap L^\infty(\mathbb{R}^d)$, we define its quadratic second quantization, if it is well defined, by 
$$\Gamma_2(T)\Psi(f)=\Psi(Tf)$$
for all $f\in L^2(\mathbb{R}^d)\cap L^\infty(\mathbb{R}^d)$. 

Note that in \cite{AcDh2}, the authors have proved that  if $\Gamma_2(T)$ is well defined on the set of the quadratic exponential vectors, then $T$ is a contraction on $L^2(\mathbb{R}^d)\cap L^\infty(\mathbb{R}^d)$ equipped with the norm $\|.\|_\infty$. Conversely, if $T$ is a contraction on $L^2(\mathbb{R}^d)\cap L^\infty(\mathbb{R}^d)$ equipped with the norm $\|.\|_\infty$, then $\Gamma_2(T)$ is well defined on the set of the quadratic exponential vectors $\Psi(f)$ such that $\|f\|_\infty<\frac{1}{2}$. Moreover, they have characterized the operators $T$ on $L^2(\Bbb R^d)\cap L^\infty(\Bbb R^d)$ whose quadratic quantization is isometric (resp. unitary). The boundedness of $\Gamma_2(T)$ was also investigated.

\section{Main result}\label{section2}

Given a contraction $p$ on $L^2(\mathbb{R}^d)\cap L^\infty(\mathbb{R}^d)$ with respect to $\|.\|_\infty$, the aim of this section is to prove under which condition $\Gamma_2(p)$ is an orthogonal projection on $\Gamma_2(L^2(\mathbb{R}^d)\cap L^\infty(\mathbb{R}^d))$. The following lemma is an improvement of Lemma 1 of \cite{AcDh1}.
\begin{lemma}\label{lem}
For all $f,g\in L^2(\mathbb{R}^d)\cap L^\infty(\mathbb{R}^d)$, one has
$$\langle B^{+n}_f\Phi,B^{+n}_g\Phi\rangle=n!\frac{d^n}{dt^n}\Big|_{t=0}\langle\Psi(\sqrt{t}f),\Psi(\sqrt{t}g)\rangle.$$
\end{lemma}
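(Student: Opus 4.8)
The plan is to reduce everything to a generating-function identity connecting the $n$-particle scalar products $\langle B^{+n}_f\Phi,B^{+n}_g\Phi\rangle$ with the exponential vector inner product $\langle\Psi(\sqrt t f),\Psi(\sqrt t g)\rangle$, and then extract the $n$-th coefficient by differentiation at $t=0$.

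First I would write down, for $t$ small enough that $\|\sqrt t f\|_\infty<\tfrac12$ and $\|\sqrt t g\|_\infty<\tfrac12$ (so that both exponential vectors exist), the expansion
\begin{equation*}
\langle\Psi(\sqrt t f),\Psi(\sqrt t g)\rangle
=\sum_{m,n\ge 0}\frac{\langle B^{+m}_{\sqrt t f}\Phi,B^{+n}_{\sqrt t g}\Phi\rangle}{m!\,n!}
=\sum_{n\ge 0}\frac{t^n}{(n!)^2}\,\langle B^{+n}_f\Phi,B^{+n}_g\Phi\rangle,
\end{equation*}
where the cross terms $m\neq n$ drop out because $B^{+m}_f\Phi$ is, by its very construction as an interacting-Fock vector, an $m$-particle vector and these are mutually orthogonal for distinct particle numbers; and the homogeneity $B^{+n}_{\sqrt t f}\Phi=t^{n/2}B^{+n}_f\Phi$ follows since $B^+_{\sqrt t f}=\sqrt t\,B^+_f$ by linearity of $f\mapsto B^+_f$. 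The series on the right is a genuine power series in $t$ with radius of convergence at least some $t_0>0$, so it is real-analytic near $0$; differentiating $n$ times and evaluating at $t=0$ kills every term except the one of order $t^n$, giving
\begin{equation*}
\frac{d^n}{dt^n}\Big|_{t=0}\langle\Psi(\sqrt t f),\Psi(\sqrt t g)\rangle
=n!\,\frac{\langle B^{+n}_f\Phi,B^{+n}_g\Phi\rangle}{(n!)^2}
=\frac{1}{n!}\,\langle B^{+n}_f\Phi,B^{+n}_g\Phi\rangle,
\end{equation*}
which rearranges to the claimed identity.

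The one point that needs genuine care — and which I expect to be the main obstacle — is justifying the term-by-term manipulations: that the double series defining $\langle\Psi(\sqrt t f),\Psi(\sqrt t g)\rangle$ converges absolutely (so that it may be reorganized and the off-diagonal terms collected and discarded), and that the resulting single power series in $t$ may be differentiated term by term at $t=0$. For the first, I would invoke the norm estimates underlying the existence of quadratic exponential vectors from \cite{AcDh1}: for $\|\sqrt t f\|_\infty,\|\sqrt t g\|_\infty<\tfrac12$ one has $\sum_n \|B^{+n}_{\sqrt t f}\Phi\|/n!<\infty$, and Cauchy–Schwarz on each term gives absolute convergence of the double sum. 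Alternatively — and this is cleaner — one can sidestep the orthogonality-of-distinct-particle-numbers discussion entirely by appealing to the closed formula \eqref{Form}: substituting $\sqrt t f,\sqrt t g$ there gives $\exp\!\big(-\tfrac c2\int\ln(1-4t\,\bar f(s)g(s))\,ds\big)$, which is manifestly analytic in $t$ near $0$, and whose Taylor coefficients one can in principle match against Proposition \ref{prop1}; but the coefficient-extraction argument via the power series $\sum_n t^n(n!)^{-2}\langle B^{+n}_f\Phi,B^{+n}_g\Phi\rangle$ is the more transparent route and is the one I would present, using \eqref{Form} only as the guarantee that the function is smooth at $t=0$ so the $n$-th derivative there makes sense.

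Finally I would note the bookkeeping convention $B^{+0}_f\Phi=\Phi=\Psi(0)$ from \eqref{Psi(0)Phi}, which makes the $n=0$ case of the identity read $\langle\Phi,\Phi\rangle=0!\cdot\langle\Psi(0),\Psi(0)\rangle$, trivially true, and confirms that no boundary term is mishandled. The whole argument is short; its entire content is the interplay between the definition $\Psi(f)=\sum_n B^{+n}_f\Phi/n!$, the linearity $B^+_{\lambda f}=\lambda B^+_f$, and the orthogonality of homogeneous components in the interacting Fock space.
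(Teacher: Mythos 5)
Your proposal is correct and follows essentially the same route as the paper: expand $\langle\Psi(\sqrt t f),\Psi(\sqrt t g)\rangle$ as the power series $\sum_m \frac{t^m}{(m!)^2}\langle B^{+m}_f\Phi,B^{+m}_g\Phi\rangle$ on a small interval $[0,\delta]$ and extract the $n$-th Taylor coefficient at $t=0$. The only difference is cosmetic: where you justify term-by-term differentiation by noting that a convergent power series is analytic (using the summability of $\sum_m \|B^{+m}_h\Phi\|/m!$ from \cite{AcDh1} plus Cauchy--Schwarz), the paper instead runs an explicit ratio test on the differentiated series, using the recursive bound $\|B^{+m}_f\Phi\|^2\leq\big[4m(m-1)\|f\|^2_\infty+2m\|f\|^2_2\big]\|B^{+(m-1)}_f\Phi\|^2$ obtained from Proposition \ref{prop1}.
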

\begin{proof}
Let $f,\,g\in L^2(\mathbb{R}^d)\cap L^\infty(\mathbb{R}^d)$ such that $\|f\|_\infty>0$ and $\|g\|_\infty>0$. Consider $0\leq t\leq \delta$, where 
$$\delta<\frac{1}{4}\inf\Big(\frac{1}{\|f\|_\infty^2},\frac{1}{\|g\|_\infty^2}\Big).$$ 
It is clear that $\|\sqrt{t}f\|_\infty< \frac{1}{2}$ and $\|\sqrt{t}g\|_\infty<\frac{1}{2}$. Moreover, one has
$$\langle\Psi(\sqrt{t}f),\Psi(\sqrt{t}g)\rangle=\sum_{m\geq0}\frac{t^m}{(m!)^2}\langle B^{+m}_f\Phi,B^{+m}_g\Phi\rangle.$$
Note that for all $m\geq n$, one has
\begin{eqnarray*}
&&\frac{d^n}{dt^n}\Big(\frac{t^m}{(m!)^2}\langle B^{+m}_{f}\Phi, B^{+m}_{g}\Phi\rangle\Big)\\
&&=\frac{m!t^{m-n}}{(m!)^2(m-n)!}\langle B^{+m}_{f}\Phi, B^{+m}_{g}\Phi\rangle\\
&&=\frac{t^{m-n}}{m!(m-n)!}\langle B^{+m}_{f}\Phi, B^{+m}_{g}\Phi\rangle.
\end{eqnarray*}
Put 
$$K_m=\frac{\delta^{m-n}}{m!(m-n)!}\|B^{+m}_{f}\Phi\| \|B^{+m}_{g}\Phi\|.$$
Then, from Proposition \ref{prop1}, it follows that
\begin{eqnarray*}
||B^{+m}_f\Phi||^2&=&c\sum_{k=0}^{m-1}2^{2k+1}\frac{m!(m-1)!}{((m-k-1)!)^2}|\|f^{k+1}\|^2_2\|B_f^{+(m-k-1)}\Phi\|^2\\
&=&c\sum_{k=1}^{m-1}2^{2k+1}\frac{m!(m-1)!}{((m-k-1)!)^2}|\|f^{k+1}\|^2_2\|B_f^{+(m-k-1)}\Phi\|^2\\
&&+2mc\|f\|^2_2\|B^{+(m-1)}_f\Phi\|^2\\
&=&c\sum_{k=0}^{m-2}2^{2k+3}\frac{m!(m-1)!}{(((m-1)-k-1)!)^2}|\|f^{k+2}\|^2_2\|B_f^{+((m-1)-k-1)}\Phi\|^2\\
&&+2mc\|f\|^2_2\|B^{+(m-1)}_f\Phi\|^2\\
&\leq&\Big(4m(m-1)\|f\|^2_\infty\Big)\Big[c\sum_{k=0}^{m-2}2^{2k+1}\frac{(m-1)!(m-2)!}{(((m-1)-k-1)!)^2}\|f^{k+1}\|^2_2\\
&&\;\;\;\;\|B_f^{+((m-1)-k-1)}\Phi\|^2\Big].
\end{eqnarray*}
But, one has
$$||B^{+(m-1)}_f\Phi||^2=c\sum_{k=0}^{m-2}2^{2k+1}\frac{(m-1)!(m-2)!}{((m-1)-k-1)!)^2}|\|f^{k+1}\|^2_2\|B_f^{+((m-1)-k-1)}\Phi\|^2.$$
Therefore, one gets 
$$||B^{+m}_f\Phi||^2\leq\Big[4m(m-1)\|f\|^2_\infty+2m\|f\|^2_2\Big]\|B^{+(m-1)}_f\Phi\|^2.$$
This proves that
$$\frac{K_m}{K_{m-1}} \leq \frac{\sqrt{4m(m-1)\|f\|^2_\infty+2m\|f\|^2_2}\sqrt{4m(m-1)\|g\|^2_\infty+2m\|g\|^2_2}}{m(m-n)}\,\delta.$$
It follows that
$$\lim_{m\rightarrow\infty}\frac{K_m}{K_{m-1}} \leq 4\|f\|_\infty\|g\|_\infty\delta<1.$$
Hence, the series $\sum_{m}K_m$ converges. Finally, we have proved that
\begin{eqnarray}\label{chi}
\frac{d^n}{dt^n}\langle\Psi(\sqrt{t}f),\Psi(\sqrt{t}g)\rangle=\sum_{m\geq n}\frac{t^{m-n}}{m!(m-n)!}\langle B^{+m}_{f}\Phi, B^{+m}_{g}\Phi\rangle.
\end{eqnarray}
Thus, by taking $t=0$ in the right hand side of (\ref{chi}), the result of the above lemma holds.
\end{proof}

As a consequence of the above lemma, we prove the following.
\begin{lemma}\label{lemm}
Let $p$ be a contraction on $L^2(\mathbb{R}^d)\cap L^\infty(\mathbb{R}^d)$ with respect to the norm $\|.\|_\infty$. If $\Gamma_2(p)$ is an orthogonal projection on $\Gamma_2(L^2(\mathbb{R}^d)\cap L^\infty(\mathbb{R}^d))$, then one has
$$\langle B^{+n}_{p(f)}\Phi, B^{+n}_{p(g)}\Phi\rangle=\langle B^{+n}_{p(f)}\Phi, B^{+n}_{g}\Phi\rangle=\langle B^{+n}_{f}\Phi, B^{+n}_{p(g)}\Phi\rangle$$
for all $f,\,g\in L^2(\mathbb{R}^d)\cap L^\infty(\mathbb{R}^d)$ and all $n\geq1$.
\end{lemma}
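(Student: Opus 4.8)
The plan is to use the fact that an orthogonal projection $P$ on a Hilbert space satisfies $P=P^*=P^2$, and to translate these operator identities into the scalar-product identities claimed in the lemma, via the exponential vectors and Lemma \ref{lem}. Concretely, since the quadratic exponential vectors form a total set, $\Gamma_2(p)$ being an orthogonal projection is equivalent to $\langle \Gamma_2(p)\Psi(f),\Gamma_2(p)\Psi(g)\rangle=\langle \Gamma_2(p)\Psi(f),\Psi(g)\rangle=\langle\Psi(f),\Gamma_2(p)\Psi(g)\rangle$ for all admissible $f,g$ (the first equality expressing $P^2=P^*P$ together with self-adjointness, or more simply $P=P^*$ and $P^2=P$). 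By definition of the quadratic second quantization this reads
\begin{equation*}
\langle\Psi(pf),\Psi(pg)\rangle=\langle\Psi(pf),\Psi(g)\rangle=\langle\Psi(f),\Psi(pg)\rangle.
\end{equation*}

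The next step is to note that if $p$ is a contraction for $\|\cdot\|_\infty$ and $\|f\|_\infty,\|g\|_\infty<\tfrac12$, then $pf,pg$ are again admissible, and moreover $\sqrt{t}f,\sqrt{t}g,p(\sqrt{t}f)=\sqrt{t}\,pf,\ldots$ are all admissible for $t$ in a neighbourhood of $0$ (here I use linearity of $p$ to pull the scalar out). Hence I may replace $f$ by $\sqrt{t}f$ and $g$ by $\sqrt{t}g$ in the three displayed scalar-product identities and obtain equalities of functions of $t$ that are real-analytic near $t=0$ — analyticity following from the power-series expansion $\langle\Psi(\sqrt t f),\Psi(\sqrt t g)\rangle=\sum_m \frac{t^m}{(m!)^2}\langle B^{+m}_f\Phi,B^{+m}_g\Phi\rangle$ used in the proof of Lemma \ref{lem}, valid for small $t$ for each of the (finitely many) relevant pairs of test functions.

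Then I differentiate each of the three identities $n$ times at $t=0$. By Lemma \ref{lem}, $n!\,\frac{d^n}{dt^n}\big|_{t=0}\langle\Psi(\sqrt t h),\Psi(\sqrt t k)\rangle=\langle B^{+n}_h\Phi,B^{+n}_k\Phi\rangle$, applied with $(h,k)=(pf,pg)$, $(pf,g)$ and $(f,pg)$ respectively. This immediately yields
\begin{equation*}
\langle B^{+n}_{p(f)}\Phi,B^{+n}_{p(g)}\Phi\rangle=\langle B^{+n}_{p(f)}\Phi,B^{+n}_{g}\Phi\rangle=\langle B^{+n}_{f}\Phi,B^{+n}_{p(g)}\Phi\rangle
\end{equation*}
for every $n\ge 1$, which is the assertion.

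The main obstacle — and the only genuinely technical point — is the legitimacy of term-by-term differentiation and of evaluating the derivative at $t=0$: one must make sure the power series for $\langle\Psi(\sqrt t\,pf),\Psi(\sqrt t\,pg)\rangle$ (and the other two) converges on a genuine interval around $0$ and can be differentiated $n$ times there. This is precisely the content of the estimate on $K_m/K_{m-1}$ established in the proof of Lemma \ref{lem}: since $p$ is an $\|\cdot\|_\infty$-contraction, $\|pf\|_\infty\le\|f\|_\infty$ and $\|pg\|_\infty\le\|g\|_\infty$, so the same $\delta$ works and the relevant series converge. A minor subtlety is handling the degenerate cases $\|f\|_\infty=0$ or $\|g\|_\infty=0$ (where $f=0$ a.e.\ and $B^{+n}_f\Phi=0$ for $n\ge1$), but these are trivial and the identities hold automatically; so I would simply remark that one may assume $\|f\|_\infty,\|g\|_\infty>0$ and invoke Lemma \ref{lem} directly.
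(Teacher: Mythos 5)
Your proposal is correct and follows essentially the same route as the paper: derive the three equalities of exponential-vector inner products from $\Gamma_2(p)=\Gamma_2(p)^*=\Gamma_2(p)^2$, substitute $\sqrt{t}f,\sqrt{t}g$, and differentiate $n$ times at $t=0$ using Lemma \ref{lem}. Your extra remarks on why the same $\delta$ works for $pf,pg$ (since $p$ is an $\|\cdot\|_\infty$-contraction) and on the degenerate cases $\|f\|_\infty=0$ are points the paper leaves implicit, and are handled correctly.
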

\begin{proof}
It $\Gamma_2(p)$ is an orthogonal projection on $\Gamma_2(L^2(\mathbb{R}^d)\cap L^\infty(\mathbb{R}^d))$, then 
\begin{eqnarray}\label{ab}
\langle\Psi(\sqrt{t}p(f)),\Psi(\sqrt{t}p(g))\rangle=\langle\Psi(\sqrt{t}p(f)),\Psi(\sqrt{t}g)\rangle=\langle\Psi(\sqrt{t}f),\Psi(\sqrt{t}p(g))\rangle
\end{eqnarray}
for all $f,\,g\in L^2(\mathbb{R}^d)\cap L^\infty(\mathbb{R}^d)$ (with $\|f\|_\infty>0$ and $\|g\|_\infty>0$) and all $0\leq t\leq\delta$ such that
$$\delta<\frac{1}{4}\inf\Big(\frac{1}{\|f\|_\infty^2},\frac{1}{\|g\|_\infty^2}\Big).$$ 
Therefore, the result of the above lemma follows from Lemma \ref{lem} and identity (\ref{ab}).
\end{proof}

Lemma \ref{lemm} ensures that the following result holds true.
\begin{lemma}\label{lemmm}
Let $p$ be a contraction on $L^2(\mathbb{R}^d)\cap L^\infty(\mathbb{R}^d)$ with respect to the norm $\|.\|_\infty$. If $\Gamma_2(p)$ is an orthogonal projection on $\Gamma_2(L^2(\mathbb{R}^d)\cap L^\infty(\mathbb{R}^d))$, then one has
\begin{eqnarray}\label{sassi}
\langle (p(f))^n,(p(g))^n\rangle=\langle f^n,(p(g))^n\rangle= \langle (p(f))^n,g^n\rangle
\end{eqnarray}
for all $f,\,g\in L^2(\mathbb{R}^d)\cap L^\infty(\mathbb{R}^d)$ and all $n\geq1$.
\end{lemma}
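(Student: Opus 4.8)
The plan is to prove \eqref{sassi}%
\ifdefined\relax\fi
\ (that is, identity (\ref{sassi})) by induction on $n$, feeding the recursion of Proposition \ref{prop1} into Lemma \ref{lemm}. Write $a_n(u,v):=\langle B^{+n}_u\Phi,B^{+n}_v\Phi\rangle$, so that $a_0(u,v)=\|\Phi\|^2=1$ (by (\ref{Psi(0)Phi}) and (\ref{Form})) and, by Proposition \ref{prop1},
$$a_n(u,v)=c\sum_{k=0}^{n-1}2^{2k+1}\frac{n!(n-1)!}{((n-k-1)!)^2}\,\langle u^{k+1},v^{k+1}\rangle\,a_{n-k-1}(u,v),\qquad n\ge1.$$
Since $p$ is linear we have $p(0)=0$, so if $f=0$ or $g=0$ both sides of (\ref{sassi}) vanish; hence we may assume $\|f\|_\infty>0$ and $\|g\|_\infty>0$, which is exactly the setting in which Lemma \ref{lemm} was proved.

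For the base case $n=1$, the recursion gives $a_1(u,v)=2c\langle u,v\rangle$, so Lemma \ref{lemm} with $n=1$ reads $2c\langle p(f),p(g)\rangle=2c\langle p(f),g\rangle=2c\langle f,p(g)\rangle$; dividing by $2c\neq 0$ yields (\ref{sassi}) for $n=1$.

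For the inductive step, assume (\ref{sassi}) holds for every index $1\le m\le n-1$, and apply the recursion above to each of the three pairs $(p(f),p(g))$, $(p(f),g)$ and $(f,p(g))$. The key observation is that the lower-order factors appearing in these three expansions, namely $a_{n-k-1}(\cdot,\cdot)$ for $0\le k\le n-1$, are pairwise equal: for $n-k-1\ge 1$ this is Lemma \ref{lemm} at index $n-k-1$, and for $n-k-1=0$ all three equal $1$. Denote this common value by $b_{n-k-1}$. Subtracting the expansion of $a_n(p(f),g)$ from that of $a_n(p(f),p(g))$, and using Lemma \ref{lemm} at index $n$ to kill the left-hand side, we obtain
\begin{eqnarray*}
0&=&c\sum_{k=0}^{n-1}2^{2k+1}\frac{n!(n-1)!}{((n-k-1)!)^2}\,b_{n-k-1}\\
&&\times\Big(\langle (p(f))^{k+1},(p(g))^{k+1}\rangle-\langle (p(f))^{k+1},g^{k+1}\rangle\Big).
\end{eqnarray*}
For every $k$ with $0\le k\le n-2$ the corresponding bracket vanishes by the induction hypothesis (\ref{sassi}) at index $k+1$, so only the $k=n-1$ term survives; since $c\neq 0$, $b_0=1$ and $2^{2n-1}n!(n-1)!\neq 0$, this forces $\langle (p(f))^n,(p(g))^n\rangle=\langle (p(f))^n,g^n\rangle$. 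Running the same argument with the pair $(f,p(g))$ in place of $(p(f),g)$ gives $\langle (p(f))^n,(p(g))^n\rangle=\langle f^n,(p(g))^n\rangle$, which completes the induction.

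I do not expect a genuine obstacle: the whole argument is bookkeeping on the recursion of Proposition \ref{prop1}. The one delicate point is the two-layered nature of the induction — at step $n$ one invokes, simultaneously, Lemma \ref{lemm} at index $n$, Lemma \ref{lemm} at all strictly smaller indices (to identify the common factors $b_{n-k-1}$), and the induction hypothesis (\ref{sassi}) at all strictly smaller indices (to discard the terms with $k\le n-2$). Beyond that, one only needs $c\neq 0$ so that the coefficient $c\,2^{2n-1}n!(n-1)!$ may be cancelled, together with the trivial reduction to $f,g\neq 0$ noted above.
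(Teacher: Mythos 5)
Your proof is correct and follows essentially the same route as the paper: induction on $n$, with the base case $n=1$ reducing to $2c\langle\cdot,\cdot\rangle$, and the inductive step obtained by expanding the three $n$-particle inner products via Proposition \ref{prop1}, equating the lower-order factors through Lemma \ref{lemm} and the induction hypothesis, and isolating the surviving top term. The only differences are cosmetic (an index shift in the induction, and your explicit handling of the trivial cases $f=0$ or $g=0$ and of the nonvanishing coefficient $c\,2^{2n-1}n!(n-1)!$, which the paper leaves implicit).
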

\begin{proof}
Suppose that $\Gamma_2(p)$ is an orthogonal projection on $\Gamma_2(L^2(\mathbb{R}^d)\cap L^\infty(\mathbb{R}^d))$. Then, in order to prove the above lemma we have to use induction.

- For $n=1$: Lemma \ref{lemm} implies that
$$\langle B^{+}_{p(f)}\Phi, B^{+}_{p(g)}\Phi\rangle=\langle B^{+}_{p(f)}\Phi, B^{+}_{g}\Phi\rangle=\langle B^{+}_{f}\Phi, B^{+}_{p(g)}\Phi\rangle$$
for all $f,\,g\in L^2(\mathbb{R}^d)\cap L^\infty(\mathbb{R}^d)$. Using the fact that $B_f\Phi=0$ for all $f\in L^2(\mathbb{R}^d)\cap L^\infty(\mathbb{R}^d)$ and the commutation relations in (\ref{commutation}) to get
\begin{eqnarray*}
\langle B^{+}_{p(f)}\Phi, B^{+}_{p(g)}\Phi\rangle&=&\langle \Phi,B_{p(f)}B^{+}_{p(g)}\Phi\rangle=2c\langle p(f),p(g)\rangle\\ 
\langle B^{+}_{p(f)}\Phi, B^{+}_{g}\Phi\rangle&=&\langle \Phi,B_{p(f)}B^{+}_{g}\Phi\rangle=2c\langle p(f),g\rangle\\
\langle B^{+}_{f}\Phi, B^{+}_{p(g)}\Phi\rangle&=&\langle \Phi,B_{f}B^{+}_{p(g)}\Phi\rangle=2c\langle f,p(g)\rangle.
\end{eqnarray*}
This proves that identity (\ref{sassi}) holds true for $n=1$.

- Let $n\geq1$ and suppose that identity (\ref{sassi}) is satisfied. Then, from Lemma \ref{lemm}, it follows that for all $f,\,g\in L^2(\mathbb{R}^d)\cap L^\infty(\mathbb{R}^d)$ one has
\begin{eqnarray}\label{rg}
\langle B^{+(n+1)}_{p(f)}\Phi, B^{+(n+1)}_{p(g)}\Phi\rangle=\langle B^{+(n+1)}_{p(f)}\Phi, B^{+(n+1)}_{g}\Phi\rangle=\langle B^{+(n+1)}_{f}\Phi, B^{+(n+1)}_{p(g)}\Phi\rangle.
\end{eqnarray}
Identity (\ref{rg}) and Proposition \ref{prop1} imply that
\begin{eqnarray}\label{roch}
\langle B^{+(n+1)}_{p(f)}\Phi, B^{+(n+1)}_{p(g)}\Phi\rangle&=&2^{2n+3}c n!(n+1)!\langle (p(f))^{n+1}, (p(g))^{n+1}\rangle\nonumber\\
&&+c\sum^{n-1}_{k=0}2^{2k+1}{n!(n+1)!\over((n-k)!)^2}\,\langle (p(f))^{k+1}, (p(g))^{k+1}\rangle\nonumber\\
&&\;\;\;\;\;\;\;\langle B^{+(n-k)}_{p(f)}\Phi,B^{+(n-k)}_{p(g)}\Phi\rangle\nonumber\\
&=&2^{2n+3}c n!(n+1)!\langle f^{n+1}, (p(g))^{n+1}\rangle\nonumber\\
&&+c\sum^{n-1}_{k=0}2^{2k+1}{n!(n+1)!\over((n-k)!)^2}\,\langle f^{k+1}, (p(g))^{k+1}\rangle\\
&&\;\;\;\;\;\;\;\langle B^{+(n-k)}_f\Phi,B^{+(n-k)}_{p(g)}\Phi\rangle\nonumber\\
&=&2^{2n+3}c n!(n+1)!\langle (p(f))^{n+1}, g^{n+1}\rangle\nonumber\\
&&+c\sum^{n-1}_{k=0}2^{2k+1}{n!(n+1)!\over((n-k)!)^2}\,\langle (p(f))^{k+1}, g^{k+1}\rangle\nonumber\\
&&\;\;\;\;\;\;\;\langle B^{+(n-k)}_{p(f)}\Phi,B^{+(n-k)}_g\Phi\rangle.\nonumber
\end{eqnarray}
Note that by induction assumption, one has
\begin{eqnarray}\label{sam}
\langle (p(f))^{k+1}, (p(g))^{k+1}\rangle=\langle f^{k+1}, (p(g))^{k+1}\rangle=\langle (p(f))^{k+1}, g^{k+1}\rangle
\end{eqnarray}
for all $k=0,\dots,n-1$. Therefore, from Lemma \ref{lemm} and identity (\ref{sam}), one gets 
\begin{eqnarray*}
&&c\sum^{n-1}_{k=0}2^{2k+1}{n!(n+1)!\over((n-k)!)^2}\,\langle (p(f))^{k+1}, (p(g))^{k+1}\rangle\langle B^{+(n-k)}_{p(f)}\Phi,B^{+(n-k)}_{p(g)}\Phi\rangle\\
&&=c\sum^{n-1}_{k=0}2^{2k+1}{n!(n+1)!\over((n-k)!)^2}\,\langle f^{k+1}, (p(g))^{k+1}\rangle\langle B^{+(n-k)}_f\Phi,B^{+(n-k)}_{p(g)}\Phi\rangle\\
&&=c\sum^{n-1}_{k=0}2^{2k+1}{n!(n+1)!\over((n-k)!)^2}\,\langle (p(f))^{k+1}, g^{k+1}\rangle\langle B^{+(n-k)}_{p(f)}\Phi,B^{+(n-k)}_g\Phi\rangle.
\end{eqnarray*}
Finally, from (\ref{roch}) one can conclude.
\end{proof}

Note that the set of contractions $p$ on $L^2(\mathbb{R}^d)\cap L^\infty(\mathbb{R}^d)$ with respect to $\|.\|_\infty$, such that $\Gamma_2(p)$ is an orthogonal projection on $\Gamma_2(L^2(\mathbb{R}^d)\cap L^\infty(\mathbb{R}^d))$, is reduced to the following.  
\begin{lemma}\label{bar}
Let $p$ be a contraction on $L^2(\mathbb{R}^d)\cap L^\infty(\mathbb{R}^d)$ with respect to the norm $\|.\|_\infty$. If $\Gamma_2(p)$ is an orthogonal projection on $\Gamma_2(L^2(\mathbb{R}^d)\cap L^\infty(\mathbb{R}^d))$, then
$$p(\bar{f})=\overline{p(f)}.$$
for all $f\in L^2(\mathbb{R}^d)\cap L^\infty(\mathbb{R}^d)$. 
 \end{lemma}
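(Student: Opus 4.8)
The plan is to combine Lemma~\ref{lemmm} at the two levels $n=1$ and $n=2$ with the density of $\mathcal{A}$ in $L^2(\mathbb{R}^d)$. First I would read off the case $n=1$, in which Lemma~\ref{lemmm} says $\langle p(f),p(g)\rangle=\langle f,p(g)\rangle=\langle p(f),g\rangle$ for all $f,g\in\mathcal{A}$. Thus $p$ is symmetric and idempotent for the $L^2$ inner product on $\mathcal{A}$: symmetry is the equality $\langle p(f),g\rangle=\langle f,p(g)\rangle$, and then $\langle f-p(f),p(g)\rangle=0$ becomes $\langle f,(p^2-p)(g)\rangle=0$ for every $f\in\mathcal{A}$, so $p^2=p$ by density of $\mathcal{A}$ in $L^2(\mathbb{R}^d)$ (note $(p^2-p)(g)\in\mathcal{A}$ since $p$ maps $\mathcal{A}$ into $\mathcal{A}$). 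Consequently $p(f)=P_V(f)$ for $f\in\mathcal{A}$, where $V:=\overline{\mathrm{Ran}(p)}$ is the closure in $L^2(\mathbb{R}^d)$ of the range of $p$ and $P_V$ is the orthogonal projection onto $V$; in particular $P_V$ maps $\mathcal{A}$ into $\mathcal{A}$ and $\ker p=\mathcal{A}\cap V^{\perp}$.

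Next I would exploit $n=2$. Lemma~\ref{lemmm} gives $\langle f^2-(p(f))^2,(p(g))^2\rangle=0$ for all $f,g\in\mathcal{A}$. Replacing $f$ by $f_1+\lambda f_2$ with $\lambda\in\mathbb{C}$ and reading off the coefficient of $\bar\lambda$ (the scalar product is conjugate-linear in the first variable) yields the bilinear identity
$$\langle f_1 f_2-p(f_1)p(f_2),(p(g))^2\rangle=0,\qquad f_1,f_2,g\in\mathcal{A}.$$
Taking $f_2\in\ker p$ annihilates the term $p(f_1)p(f_2)$, so $\langle f_1 f_2,(p(g))^2\rangle=0$ for \emph{every} $f_1\in\mathcal{A}$. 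Since $\overline{f_2}\,(p(g))^2$ lies in $\mathcal{A}\subseteq L^2(\mathbb{R}^d)$ and $\mathcal{A}$ is dense there, this forces $\overline{f_2}\,(p(g))^2=0$ a.e., i.e. $f_2\cdot p(g)=0$ almost everywhere. Hence every element of $\ker p$ and every element of $\mathrm{Ran}(p)$ have essentially disjoint supports.

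Finally I would pin down $p$. Let $E\subseteq\mathbb{R}^d$ be the essential union of the supports of the elements of $\mathrm{Ran}(p)$ (well defined, Lebesgue measure being $\sigma$-finite), and write $L^2(E)$ for the functions of $L^2(\mathbb{R}^d)$ vanishing a.e.\ off $E$. Then $\mathrm{Ran}(p)\subseteq L^2(E)$, while the previous step gives $\ker p\subseteq L^2(E^{c})$. Approximating $\xi\in V^{\perp}$ by $\varphi_n\in\mathcal{A}$ and noting that $\varphi_n-p(\varphi_n)\in\ker p$ converges to $\xi$ shows $\ker p$ is dense in $V^{\perp}$; hence $V\subseteq L^2(E)$ and $V^{\perp}\subseteq L^2(E^{c})$. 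Since $L^2(\mathbb{R}^d)=L^2(E)\oplus L^2(E^{c})=V\oplus V^{\perp}$ with $V\subseteq L^2(E)$ and $V^{\perp}\subseteq L^2(E^{c})$, both inclusions must be equalities, so $V=L^2(E)$ and therefore $p=\mathcal{M}_{\chi_E}$ on $\mathcal{A}$. In particular $p(\overline{f})=\chi_E\overline{f}=\overline{\chi_E f}=\overline{p(f)}$, which is the assertion.

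The main obstacle is the measure-theoretic bookkeeping of the last paragraph (well-definedness of $E$, density of $\ker p$ in $V^{\perp}$, and the direct-sum argument forcing $V=L^2(E)$), together with being careful about conjugate-linearity in the polarization step. One should note that this argument in fact already delivers the forward implication of the main theorem; the present lemma isolates only the weaker conclusion $p(\overline{f})=\overline{p(f)}$.
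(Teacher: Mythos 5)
Your argument is correct, but it reaches the conclusion by a genuinely different (and in fact stronger) route than the paper. The paper also starts from Lemma \ref{lemmm} with $n=1$ to get $p=p^*=p^2$, but then polarizes the $n=2$ identity in \emph{both} arguments to obtain $\langle p(f_1)p(f_2),g_1g_2\rangle=\langle p(f_1)p(f_2),p(g_1)p(g_2)\rangle$, rewrites this as operator identities between $p$ and multiplication operators ($\mathcal{M}_{p(f)\bar g}\,p=p\,\mathcal{M}_{f\overline{p(g)}}$, $\mathcal{M}_{|p(h)|^2}p=p\mathcal{M}_{|p(h)|^2}$), and distills from these the pointwise relation $|p(g)|^2p(g)=(p(g))^2\bar g$; for real $g$ the polar decomposition then forces $p(g)$ to be real, which gives $p(\bar f)=\overline{p(f)}$ by $\mathbb{C}$-linearity. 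You instead polarize only in the first argument, feed in $f_2\in\ker p$ to show that kernel and range have essentially disjoint supports, and then identify $p$ with $P_V=\mathcal{M}_{\chi_E}$ outright; the conjugation property drops out as a triviality. Your individual steps check out: the cross-term extraction is legitimate because the diagonal terms already cancel; $\overline{f_2}(p(g))^2\in L^2$ so density of $\mathcal{A}$ does kill it; the essential union $E$ exists by $\sigma$-finiteness and is attained on a countable subfamily, so $\ker p\subseteq L^2(E^c)$; and the density of $\{\varphi-p(\varphi):\varphi\in\mathcal{A}\}$ in $V^\perp$ plus the two orthogonal decompositions forces $V=L^2(E)$. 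What your approach buys is that it proves the forward implication of the main theorem in one stroke (the paper needs a further argument, via $\mathcal{M}_{(p(g))^2}=p\mathcal{M}_{g^2}p$ and a strong limit over $I\uparrow\mathbb{R}^d$, to get from Lemma \ref{bar} to $p=\mathcal{M}_{\chi_I}$); what the paper's route buys is that it stays entirely inside algebraic identities on $\mathcal{A}$ and avoids the measure-theoretic bookkeeping you flag. As you note, your argument really belongs to the proof of the theorem rather than of this lemma alone.
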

\begin{proof}
Let $p$ be a contraction on $L^2(\mathbb{R}^d)\cap L^\infty(\mathbb{R}^d)$ with respect to the norm $\|.\|_\infty$ such that $\Gamma_2(p)$ is an orthogonal projection on $\Gamma_2(L^2(\mathbb{R}^d)\cap L^\infty(\mathbb{R}^d))$. Then, from Lemma \ref{lemmm} it is clear that $p=p^*=p^2$ (taking $n=1$ in (\ref{sassi})). Moreover, for all $f_1,f_2,g_1,g_2\in L^2(\mathbb{R}^d)\cap L^\infty(\mathbb{R}^d)$, one has
\begin{eqnarray*}
\langle (p(f_1+f_2))^2,\,(g_1+g_2)^2\rangle=\langle (p(f_1+f_2))^2,\,(p(g_1+g_2))^2\rangle.
\end{eqnarray*} 
It follows that
\begin{eqnarray}\label{p}
&&\langle (p(f_1))^2,g_1^2+g_2^2\rangle+\langle (p(f_2))^2,g_1^2+g_2^2\rangle+4\langle p(f_1)p(f_2),g_1g_2\rangle\nonumber\\
&&=\langle (p(f_1))^2,(p(g_1))^2+(p(g_2))^2\rangle+\langle (p(f_2))^2,(p(g_1))^2+(p(g_2))^2\rangle\\
&&+4\langle p(f_1)p(f_2),p(g_1)p(g_2)\rangle.\nonumber
\end{eqnarray}
Then, using (\ref{sassi}) and (\ref{p}) to obtain 
\begin{eqnarray}\label{bah}
\langle p(f_1)p(f_2),g_1g_2\rangle=\langle p(f_1)p(f_2),p(g_1)p(g_2)\rangle
\end{eqnarray}
for all $f_1,f_2,g_1,g_2\in L^2(\mathbb{R}^d)\cap L^\infty(\mathbb{R}^d)$. Now, denote by $\mathcal{M}_a$ the multiplication operator by the function $a\in L^2(\mathbb{R}^d)\cap L^\infty(\mathbb{R}^d)$. Then, identity (\ref{bah}) implies that
$$\langle \mathcal{M}_{p(f_2)\bar{g_2}}p(f_1),g_1\rangle=\langle \mathcal{M}_{p(f_2)\overline{p(g_2)}}p(f_1),p(g_1)\rangle$$
for all $f_1,f_2,g_1,g_2\in L^2(\mathbb{R}^d)\cap L^\infty(\mathbb{R}^d)$. This gives that
\begin{equation}\label{multiplication}
\mathcal{M}_{p(f_2)\bar{g_2}}p=p\mathcal{M}_{p(f_2)\overline{p(g_2)}}p
\end{equation}
for all $f_2, g_2\in L^2(\mathbb{R}^d)\cap L^\infty(\mathbb{R}^d)$. Taking the adjoint in (\ref{multiplication}), one gets
\begin{equation}\label{adjoint}
p\mathcal{M}_{\overline{p(f_2)}\,g_2}=p\mathcal{M}_{\overline{p(f_2)}p(g_2)}p.
\end{equation}
Note that, for all $f,g\in L^2(\mathbb{R}^d)\cap L^\infty(\mathbb{R}^d)$, identity (\ref{multiplication}) implies that
\begin{eqnarray}\label{identification}
\mathcal{M}_{p(f)\bar{g}}p=p\mathcal{M}_{p(f)\,\overline{p(g)}}p.
\end{eqnarray}
Moreover, from (\ref{adjoint}), one has
\begin{eqnarray}\label{iden}
p\mathcal{M}_{p(f)\,\overline{p(g)}}p=p\mathcal{M}_{f\,\overline{p(g)}}.
\end{eqnarray}
Therefore, identities (\ref{identification}) and (\ref{iden}) yield
\begin{equation}\label{dual}
\mathcal{M}_{p(f)\bar{g}}p=p\mathcal{M}_{f\,\overline{p(g)}}
\end{equation}
for all $f,g\in L^2(\mathbb{R}^d)\cap L^\infty(\mathbb{R}^d)$. Hence, for all $f,g,h\in L^2(\mathbb{R}^d)\cap L^\infty(\mathbb{R}^d)$, identity (\ref{dual}) gives
\begin{equation}\label{dhahri}
p(f\,\overline{p(g)}h)=p(f)\bar{g}p(h).
\end{equation}
Taking $f=g=p(h)$ in (\ref{dual}) to get
\begin{equation}\label{17}
\mathcal{M}_{|p(h)|^2}p=p\mathcal{M}_{|p(h)|^2}.
\end{equation}
Then, if we put $f=h=p(g)$ in (\ref{dhahri}), one has
$$p(\overline{p(g)}\,p(g)^2)=p(|p(g)|^2p(g))=(p(g))^2\bar{g}.$$
But, from (\ref{17}), one has
$$p(|p(g)|^2p(g))=(p\mathcal{M}_{|p(g)|^2})(p(g))=\mathcal{M}_{|p(g)|^2}p(p(g))=|p(g)|^2p(g).$$
Hence, one obtains
\begin{equation}\label{fin}
|p(g)|^2p(g)=(p(g))^2\bar{g}
\end{equation}
for all $g\in L^2(\mathbb{R}^d)\cap L^\infty(\mathbb{R}^d)$. Now, let $g$ be a real function in $L^2(\mathbb{R}^d)\cap L^\infty(\mathbb{R}^d)$. So, the polar decomposition of $p(g)$ is given by
$$p(g)=|p(g)|e^{i\theta_{p(g)}}.$$
Thus, identity (\ref{fin}) implies that 
$$|p(g)|^3e^{-i\theta_{p(g)}}=|p(g)|^2g.$$
This proves that for all $x\in\mathbb{R}^d$, $\theta_{p(g)}(x)=k_x\pi$, $k_x\in\mathbb{Z}$. Therefore, $p(g)$ is a real function. Now, taking $f=f_1+if_2\in L^2(\mathbb{R}^d)\cap L^\infty(\mathbb{R}^d)$, where $f_1,f_2$ are real functions on $\mathbb{R}^d$. It is clear that 
$$p(\bar{f})=p(f_1-if_2)=\overline{p(f_1)+ip(f_2)}=\overline{p(f)}.$$
This completes the proof of the above lemma.
\end{proof}

As a consequence of Lemmas \ref{lemmm} and \ref{bar} we prove the following theorem.
\begin{theorem}
Let $p$ be a contraction on $L^2(\mathbb{R}^d)\cap L^\infty(\mathbb{R}^d)$ with respect to the norm $\|.\|_\infty$. Then, $\Gamma_2(p)$ is an orthogonal projection on $\Gamma_2(L^2(\mathbb{R}^d)\cap L^\infty(\mathbb{R}^d))$ if and only if $p=\mathcal{M}_{\chi_I}$, where $\mathcal{M}_{\chi_I}$ is a multiplication operator by a characteristic function $\chi_I$, $I\subset\mathbb{R}^d$.
\end{theorem}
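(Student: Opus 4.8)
The plan is to prove the two implications separately: the converse direction by a direct computation with quadratic exponential vectors, and the forward direction by exploiting Lemma \ref{bar} together with the identities established in its proof.

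\textbf{Sufficiency.} Suppose $p=\mathcal{M}_{\chi_I}$ for a measurable $I\subset\mathbb{R}^d$. Since $\|\chi_I f\|_\infty\le\|f\|_\infty$, $p$ is a contraction for $\|\cdot\|_\infty$, so $\Gamma_2(p)$ is well defined and extends to a bounded operator on $\Gamma_2(\mathcal{A})$ (a contraction, which can be checked directly or via the estimates of \cite{AcDh2}); moreover $\Psi(\chi_I f)$ exists whenever $\|f\|_\infty<\frac{1}{2}$. From $\chi_I^2=\chi_I$ we get $\Gamma_2(p)^2\Psi(f)=\Psi(\chi_I^2 f)=\Psi(\chi_I f)=\Gamma_2(p)\Psi(f)$, and since the quadratic exponential vectors are total, $\Gamma_2(p)^2=\Gamma_2(p)$. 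For symmetry, $\chi_I$ being real, (\ref{Form}) gives, for $\|f\|_\infty,\|g\|_\infty<\frac{1}{2}$,
\begin{eqnarray*}
\langle\Gamma_2(p)\Psi(f),\Psi(g)\rangle&=&\langle\Psi(\chi_I f),\Psi(g)\rangle=e^{-\frac{c}{2}\int_{\mathbb{R}^d}\ln(1-4\chi_I(s)\bar{f}(s)g(s))\,ds}\\
&=&\langle\Psi(f),\Psi(\chi_I g)\rangle=\langle\Psi(f),\Gamma_2(p)\Psi(g)\rangle,
\end{eqnarray*}
so $\Gamma_2(p)$ is symmetric on a total set of vectors; being bounded it is self-adjoint, and together with idempotency this makes it an orthogonal projection.

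\textbf{Necessity.} Assume $\Gamma_2(p)$ is an orthogonal projection. By Lemma \ref{bar}, $p(\bar f)=\overline{p(f)}$, hence $p(g)$ is real whenever $g$ is real, and in that case identity (\ref{fin}) reduces to $(p(g))^2\big(p(g)-g\big)=0$. Thus for every real $g\in\mathcal{A}$ one has, almost everywhere, $p(g)(x)\in\{0,g(x)\}$, i.e. $p(g)=\chi_{S_g}\,g$ with $S_g:=\{x:\ p(g)(x)\ne0\}$. I would then show that $S_g$ is essentially independent of $g$: given real $g,h\in\mathcal{A}$, apply the dichotomy to $g$, $h$ and $g+h$ and use $p(g+h)=p(g)+p(h)$ to see that at a.e.\ $x$ with $g(x)\ne0$ and $h(x)\ne0$ the only consistent alternatives are $p(g)(x)=g(x),\ p(h)(x)=h(x)$ or $p(g)(x)=p(h)(x)=0$, so $\chi_{S_g}(x)=\chi_{S_h}(x)$. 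Fixing $g_0\in\mathcal{A}$ with $g_0\ne0$ a.e.\ (for instance a Gaussian) and putting $I:=S_{g_0}$, one obtains $\chi_{S_g}=\chi_I$ a.e.\ on $\{g\ne0\}$, hence $p(g)=\chi_I g$ for every real $g$. Writing finally $f=f_1+if_2$ with $f_1,f_2$ real and using linearity of $p$ gives $p(f)=\chi_I f_1+i\chi_I f_2=\mathcal{M}_{\chi_I}f$, so $p=\mathcal{M}_{\chi_I}$.

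Most of this is routine: the exponential-vector computation in the sufficiency part, and the pointwise algebra extracted from (\ref{fin}). \textbf{The main obstacle is the patching step} in the necessity part, namely upgrading the pointwise dichotomy (one set $S_g$ per test function) to a single set $I$ valid simultaneously for all $f\in\mathcal{A}$. This forces one to keep careful track of the null sets and of the zero sets $\{g=0\}$ on which $\chi_{S_g}$ is undetermined, and to verify that the passage from real to complex $f$ via real and imaginary parts is compatible with these almost-everywhere identifications. Note that the whole forward direction rests on Lemma \ref{bar}, hence on the chain of identities (\ref{dhahri})--(\ref{fin}) derived in its proof.
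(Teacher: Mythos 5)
Your proposal is correct, and the sufficiency half follows the paper's argument (the scalar-product identity from (\ref{Form}) on the total set of exponential vectors). The necessity half, however, takes a genuinely different route after Lemma \ref{bar}. The paper polarizes the $n=2$ case of Lemma \ref{lemmm} to obtain the operator identity $\mathcal{M}_{(p(g))^2}=p\mathcal{M}_{g^2}p$ (identity (\ref{war})), specializes to $g=\chi_I$, lets $I\uparrow\mathbb{R}^d$ so that $\mathcal{M}_{\chi_I}\to id$ strongly, and invokes the strong closedness of the algebra of multiplication operators to conclude $p=\mathcal{M}_a$; the relation $a^n=a$ then forces $a=\chi_I$. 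You instead extract from identity (\ref{fin}) the pointwise dichotomy $(p(g))^2(p(g)-g)=0$ for real $g$, i.e.\ $p(g)=\chi_{S_g}g$, and patch the supports $S_g$ into a single set $I$ using linearity applied to $g$, $h$ and $g+h$. Your patching step is sound: at a point where $g(x)\neq0$ and $h(x)\neq0$, the mixed alternatives $p(g)(x)=g(x)$, $p(h)(x)=0$ (and its symmetric counterpart) force $h(x)=0$ via the dichotomy for $g+h$, so only the two consistent alternatives survive, and the choice of a nowhere-vanishing reference function $g_0$ pins down $I$. What your approach buys is the avoidance of the limiting argument, which in the paper requires some unstated care (the functions $\chi_I$ must be taken in $L^2$, i.e.\ $|I|<\infty$, and one must know that a strong limit of uniformly bounded multiplication operators is again a multiplication operator); what it costs is the null-set bookkeeping you already flag, which is routine. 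Both arguments ultimately rest on the chain of identities culminating in (\ref{fin}) inside the proof of Lemma \ref{bar}, so neither is self-contained past that point.
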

\begin{proof}
Note that if $p=\mathcal{M}_{\chi_I}$, $I\subset\mathbb{R}^d$, then from identity (\ref{Form}) it is clear that
\begin{eqnarray*}
e^{-\frac{c}{2}\int_{I}\ln(1-4\bar{f}(s)g(s))ds}&=&\langle \Psi_2(p(f)),\Psi_2(g)\rangle\\
&=&\langle \Psi_2(f),\Psi_2(p(g))\rangle\\
&=&\langle \Psi_2(p(f)),\Psi_2(p(g))\rangle
\end{eqnarray*}
for all $f,\,g\in L^2(\mathbb{R}^d)\cap L^\infty(\mathbb{R}^d)$ such that $\|f\|_\infty<\frac{1}{2}$ and $\|g\|_\infty<\frac{1}{2}$. Hence, $\Gamma_2(p)$ is an orthogonal projection on $\Gamma_2(L^2(\mathbb{R}^d)\cap L^\infty(\mathbb{R}^d))$.

Now, suppose that $\Gamma_2(p)$ is an orthogonal projection on $\Gamma_2(L^2(\mathbb{R}^d)\cap L^\infty(\mathbb{R}^d))$. Then, Lemma \ref{lemmm} implies that
\begin{eqnarray*}
\langle (p(f))^n,(p(g))^n\rangle=\langle f^n,(p(g))^n\rangle= \langle (p(f))^n,g^n\rangle
\end{eqnarray*}
for all $f,\,g\in L^2(\mathbb{R}^d)\cap L^\infty(\mathbb{R}^d)$ and all $n\geq1$. In particular, if $n=2$ one has 
\begin{eqnarray*}
\langle (p(f_1+\bar{f_2}))^2, g^2\rangle=\langle (f_1+\bar{f_2})^2,(p(g))^2\rangle
\end{eqnarray*} 
for all $f_1,\,f_2,\,g\in L^2(\mathbb{R}^d)\cap L^\infty(\mathbb{R}^d)$. This gives
\begin{eqnarray}\label{samiha}
&&\langle (p(f_1))^2, g^2\rangle+2\langle p(f_1)p(\bar{f_2}), g^2\rangle+
\langle (p(\bar{f_2}))^2, g^2\rangle\nonumber\\
&&=\langle f_1^2, (p(g))^2\rangle+2\langle f_1\bar{f_2}, (p(g))^2\rangle+
\langle (\bar{f_2})^2, (p(g))^2\rangle.
\end{eqnarray}
Using identity (\ref{samiha}) and Lemma \ref{lemmm} to get 
$$\langle p(f_1)p(\bar{f_2}), g^2\rangle=\langle f_1\bar{f_2}, (p(g))^2\rangle.$$
This yields 
\begin{eqnarray}\label{erri}
\int_{\mathbb{R}^d}\bar{f_1}(x)f_2(x)(p(g))^2(x)dx=\int_{\mathbb{R}^d}\overline{p(f_1)}(x)p(f_2)(x)g^2(x)dx.
\end{eqnarray}
But, from Lemma \ref{bar}, one has $\overline{p(f)}=p(\bar{f})$, for all $f\in L^2(\mathbb{R}^d)\cap L^\infty(\mathbb{R}^d)$. Then, identity (\ref{erri}) implies that
\begin{eqnarray*}
\langle f_1, M_{(p(g))^2}f_2\rangle=\langle f_1, (pM_{g^2}p)f_2\rangle
\end{eqnarray*}
for all $f,\,g\in L^2(\mathbb{R}^d)\cap L^\infty(\mathbb{R}^d)$. Hence, one obtains
\begin{eqnarray}\label{war}
M_{(p(g))^2}=pM_{g^2}p.
\end{eqnarray}
In particular, for $g=\chi_I$ where $I\subset\mathbb{R}^d$, one has
\begin{equation}\label{hed}
\mathcal{M}_{(p(\chi_I))^2}=p\mathcal{M}_{\chi_I}p.
\end{equation}
If $I$ tends to $\mathbb{R}^d$, the operator $\mathcal{M}_{\chi_I}$ converges to $id$ (identity of $L^2(\mathbb{R}^d)\cap L^\infty(\mathbb{R}^d)$) for the strong topology. From (\ref{hed}), it follows that
$$p(f)=p^2(f)=\lim_{I\uparrow\mathbb{R}^d}\mathcal{M}_{(p(\chi_I))^2}f$$
for all $f\in L^2(\mathbb{R}^d)\cap L^\infty(\mathbb{R}^d)$. But, the set of multiplication operators is a closed set for the strong topology. This proves that $p=\mathcal{M}_{a}$, where $a\in L^\infty(\mathbb{R}^d)$. Note that $p=p^2$ is a positive operator. This implies that $a$ is a positive function. Moreover, one has $p^n=p$ for all $n\in\mathbb{N}^*$. This gives $\mathcal{M}_{a^n}=\mathcal{M}_{a}$ for all $n\in\mathbb{N}^*$. It follows that $a^n=a$ for all $n\in\mathbb{N}^*$. Therefore, the operator $a$ is necessarily a characteristic function on $\mathbb{R}^d$.  
\end{proof}

\bigskip
{\bf\large Acknowledgments}\bigskip

I gratefully acknowledge stimulating discussions with Eric Ricard. I would like also to thank Rolando Rebolledo for his hospitality, for reading the paper and for interesting comments.

\end{document}